\newtheorem{theorem}{Theorem}[section]
\newtheorem{lemma}[theorem]{Lemma}
\newtheorem{proposition}[theorem]{Proposition}
\newtheorem{conjecture}[theorem]{Conjecture}
\theoremstyle{definition}
\newtheorem{example}[theorem]{Example}
\begin{document}

\title{A note on a conjecture concerning rank one perturbations of singular M-matrices}

\author{B.~Anehila\footnote{ROC Mondriaan, Theresiastraat 8, 2593 AN Den Haag
E-mail: \texttt{b.anehila@rocmondriaan.nl} or \texttt{b.anehila@gmail.com}},
A.C.M.~Ran\footnote{Department of Mathematics, Faculty of Science, VU Amsterdam, De Boelelaan
    1111, 1081 HV Amsterdam, The Netherlands
    and Research Focus: Pure and Applied Analytics, North-West~University,
Potchefstroom,
South Africa. E-mail:
    \texttt{a.c.m.ran@vu.nl}}}

\date{}

\maketitle

\begin{abstract}
A conjecture  from \cite{BR} concerning the location of eigenvalues of rank one perturbations of singular M-matrices is shown to be false in dimension four and higher, but true for dimension two, as well as for dimension three with an additional condition on the perturbation.
\end{abstract}

\textit{Keywords}
Rank one perturbations, M-matrices

\textit{AMS subject classifications:} 15A18

\section{Introduction}

Let $H$ be a nonnegative $n \times n$ matrix, and let $A=\rho(H)I-H$. Here, $\rho(H)$ denotes the spectral radius of $H$. Note that $A$ has all its eigenvalues in the open right half plane, with the exception of zero. We assume that $\rho(H)$ is a simple eigenvalue of $H$, i.e., both the geometric and the algebraic mutliplicity are one. Then zero is a simple eigenvalue of $A$.
Further, let $v$ and $w$ be nonnegative vectors. 
For $t>0$ consider the matrix $B(t)=A+tvw^\top $. It was shown in 
\cite{BR}, Lemma 2.11, that there is a $t_0>0$ such that for $0<t<t_0$ the matrix $B(t)$ has all its eigenvalues in the open right half plane under a certain extra condition, the so-called NZP condition, which will be introduced shortly. It was also shown in \cite{BR} by means of a counterexample (see Example 2.15) that this does not hold for all $t>0$. The counterexample in \cite{BR} is of size $6\times 6$. 
 
In order to state one of the conjectures in \cite{BR}, which is the focus of this note, we need to introduce a condition called NZP (non-zero projection) in \cite{BR}. For this, consider a right unit eigenvector $z_r$ of $H$ corresponding to $\rho(H)$ and a left unit eigenvector $z_l$ of $H$ corresponding to $\rho(H)$.  Note that $z_l$ and $z_r$ are unique up to multiplication by $-1$. 
The vectors $v$ and $w$ are said to satisfy NZP if the following hold: $z_l^\top  v\not=0$ and $w^\top  z_r\not=0$.

If $H$ is irreducible, then $z_r$ and $z_l$ have positive entries (see, e.g., \cite{BP}, Chapter 2, Theorem 2.10, part c). Hence, if $v$ and $w$ are nonnegative vectors and $H$ is irreducible, then the condition NZP is automatically satisfied.

The following conjecture was stated in \cite{BR}, Conjecture 2.17.

\begin{conjecture}
Let $H$ be a nonnegative $n\times n$ matrix, let $A=\rho(H)I-H$. Assume that $0$ is a simple eigenvalue of $A$. Let $v$ and $w$ be nonnegative vectors satisfying NZP, then there
is a positive $t_1$ such that for $t>t_1$ the eigenvalues of $B(t)=A+tvw^\top $ are again all in the open right half plane. 
\end{conjecture}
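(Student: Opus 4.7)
The plan is to analyse $B(t)$ for large $t$ via the matrix determinant lemma. Setting
\[
q(\lambda) := w^\top \mathrm{adj}(\lambda I - A)\, v,
\]
one has $\det(\lambda I - B(t)) = p_A(\lambda) - t\, q(\lambda)$, and $q$ is a polynomial of degree at most $n-1$. When $w^\top v > 0$ the leading coefficient of $q$ is $w^\top v$, so $\deg q = n-1$. (The case $w^\top v = 0$ is compatible with NZP and would need separate treatment, via a different rescaling, since the rank-one perturbation is then nilpotent.)

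Assuming $w^\top v > 0$, the roots of $p_{B(t)}$ behave as follows as $t\to\infty$: one root escapes along the positive real axis with leading order $t\, w^\top v$, while the remaining $n-1$ roots converge to the roots of $q$. Therefore, showing that every root of $q$ lies in the open right half plane would immediately yield a $t_1$ after which all eigenvalues of $B(t)$ sit in that half plane.

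A useful sanity check is at $\lambda = 0$: since $0$ is a simple eigenvalue of $A$ with right and left eigenvectors $z_r, z_l$, one has $\mathrm{adj}(-A) = \gamma\, z_r z_l^\top$ for some $\gamma \neq 0$, so $q(0) = \gamma\, (w^\top z_r)(z_l^\top v) \neq 0$ by NZP. Hence $0$ is never a limit of eigenvalues of $B(t)$, consistent with what the conjecture predicts. To locate the remaining roots of $q$, I would try to interpret them as the spectrum of a natural compression of $A$ (the transmission zeros of the triple $(A,v,w^\top)$, obtained for instance by projecting $A$ onto $\ker w^\top$ along $\mathrm{span}(v)$) and argue that the M-matrix structure of $A$ together with $v,w \ge 0$ forces this spectrum into the open right half plane.

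The main obstacle is precisely this last step, and it is where I expect the argument to break down. Compressions of an M-matrix by arbitrary nonnegative $v,w$ need not inherit the M-matrix property, so there is no direct structural reason for the subleading coefficients of $q$ to enjoy a sign pattern that pins the roots into the right half plane. I anticipate that for $n=2$ the unique root of $q$ can be written down explicitly and shown to be positive from NZP and the singular M-matrix structure alone; for $n=3$ an additional positivity hypothesis on $v$ and $w$ should suffice to pin both roots down; but for $n\ge 4$ the extra degrees of freedom in $H$, $v$ and $w$ should allow one to place a root of $q$ in the open left half plane, thereby producing a counterexample to the conjecture as stated.
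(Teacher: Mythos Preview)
Your proposal is correct and follows essentially the same route as the paper: reduce via the determinant identity to the polynomial $q(\lambda)=w^\top\mathrm{adj}(\lambda I-A)v$ (the paper's $p_{vw}$), observe that for $w^\top v>0$ one eigenvalue escapes to $+\infty$ while the rest converge to the roots of $q$, and conclude that the conjecture stands or falls with whether those roots lie in the open right half plane. Your prediction of the outcome---true for $n=2$, true for $n=3$ under $w^\top v\neq 0$, false for $n\ge 4$---is exactly what the paper establishes.

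The only substantive gap is that your proposal remains a programme: you anticipate a counterexample for $n\ge 4$ but do not construct one, and you sketch the small-dimension cases without carrying them out. The paper closes these by exhibiting an explicit irreducible $4\times 4$ example with $p_{vw}$ having two roots in the left half plane, and by computing for $n=3$ that the real part of the complex roots of $p_{vw}$ equals $-w^\top(p_2 I+A)v/(2w^\top v)$, which is positive because $p_2 I+A$ has nonpositive entries (the irreducible M-matrix structure forces positive diagonal entries of $A$). The paper also shows by example that $w^\top v\neq 0$ is genuinely necessary when $n=3$, so your parenthetical remark that this case ``would need separate treatment'' is in fact where the $3\times 3$ statement already fails.
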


In \cite{BR}, Theorem 2.7 the conjecture was already shown to be true in the two-dimensional case assuming that either the zero eigenvalue of $A$ is simple, or $w^\top v \not= 0$. In fact, in these cases the eigenvalues of $B(t)$ are both in the open right half plane for all $t>0$. 
The purpose of this short note is to show that the conjecture stated above is false in general, even when we make the extra assumption that $H$ is irreducible,
but true in the three-dimensional case under the assumtion that $H$ is irreducible and $w^\top v\not=0$. It will be shown also that the latter condition is necessary.

To put the conjecture in context, the problem of studying the behaviour of eigenvalues of parametrized rank one perturbations $B(t)=A+tvw^\top$ of a matrix $A$ has been studied for a long time, see e.g., \cite{Baumgartel,Kato}. Mostly, however, only the behaviour for small values of $t$ has been studied (\cite{Lidskii, VL}, see also \cite{Karow, Moro-Burke-Overton} for more detailed analysis). The problem of considering the behaviour of the eigenvalues for large values of $t$ was considered in \cite{RW1,RW2}. 
Restrictions on $A$, $v$ and $w$, allowing only certain structured matrices, where considered in e.g. \cite{MMRR1}, but only for generic vectors $v$ and $w$. For the non-structured situation, without restrictions on $A$, $v$ and $w$, and generic vectors $v$ and $w$, see \cite{HM,MD,Sa1,Sa2}.
In \cite{BR} the matrix $A$ and the vectors $v$ and $w$ are restricted in a different manner: $A$ is a singular $M$-matrix, and $v$ and $w$ are nonnegative vectors. In \cite{BR}, Lemma 2.11 it was shown that for small values of the parameter $t>0$ the eigenvalues of $B(t)$ are all in the open right half plane when the condition NZP is satisfied (in particular when $H$ is irreducible).
The conjecture above asks explicitly for the behaviour of the eigenvalues of $B(t)$ for large values of $t>0$.

An important role in our arguments is played by the following formula for the characteristic polynomial of $B(t)$: let $m_A(\lambda)$ denote the minimal polynomial of $A$, then
$$
\det(\lambda I_n-B(t))= \frac{\det(\lambda I_n -A)}{m_A(\lambda)}(m_A(\lambda)-tp_{vw}(\lambda)),
$$
where $p_{vw}(\lambda)=m_A(\lambda)w^\top  (\lambda I_n -A)^{-1}v$. Note that formally the equation only holds for $\lambda$ not one of the eigenvalues of $A$, but since both sides are polynomials, they then coincide everywhere.

\section{Counterexample and general remarks}

We start by providing a counterexample in dimension four. Let
$$
H=\begin{bmatrix}
0.1  &  1   &    0  &   0\\
0   & 0.1   & 1 &    0\\
0     &    0 &   0.1 &   1\\
10^{-4}      &   0     &    0  &  0.1\\
\end{bmatrix}, \quad
v=
\begin{bmatrix}
2\\
0.1\\
0.1\\
2\\
\end{bmatrix}, \quad
w=\begin{bmatrix}
2\\
0.1\\
2\\
0.1\\
\end{bmatrix}.
$$
Then $H$ is nonnegative and irreducible, and the eigenvalues of $H$ are $0, 0.1\pm 0.1 i, 0.2$ and so $\rho(H)=0.2$. Consider
$A=\rho(H)I_4-H$, and $B(t)=A+t vw^\top $. Note that $H$, $v$ and $w$ satisfy all the conditions of the conjecture.
Following \cite{RW2}, Theorem 17, see also \cite{RW1}, Theorem 4.1 we have
that for $t\to\infty$ the eigenvalues of $B(t)$ behave as follows: one is  positive, and approximately equal to $t w^\top  v+O(1)$, and the other three converge to the roots of the polynomial $p_{vw}(\lambda)=w^\top  m_A(\lambda)(\lambda I_4-A)^{-1}v$, where $m_A(\lambda)$ is the minimal polynomial of $A$. In this case, $p_{vw}(\lambda)$ is given by
$$
p_{vw}(\lambda)=\det(\lambda I-A)-\det(\lambda I- (A+vw^\top )).
$$
For this specific case, this is equal to
$$
p_{vw}(\lambda)= 4.4100\lambda^3   - 5.5330\lambda^2 +    1.3747\lambda    - 4.0866,
$$
which has roots in $1.4710$ and $ - 0.1082 \pm 0.7863 i$. Hence for large values of $t$ the matrix $B(t)$ has two eigenvalues in the open left half plane, and these eigenvalues do not converge to values in the open right half plane. The eigenvalues are plotted as functions of $t$ in Figure 1 below.

\begin{figure}[H]
\centering
\includegraphics[height=5.5cm,width=10cm]{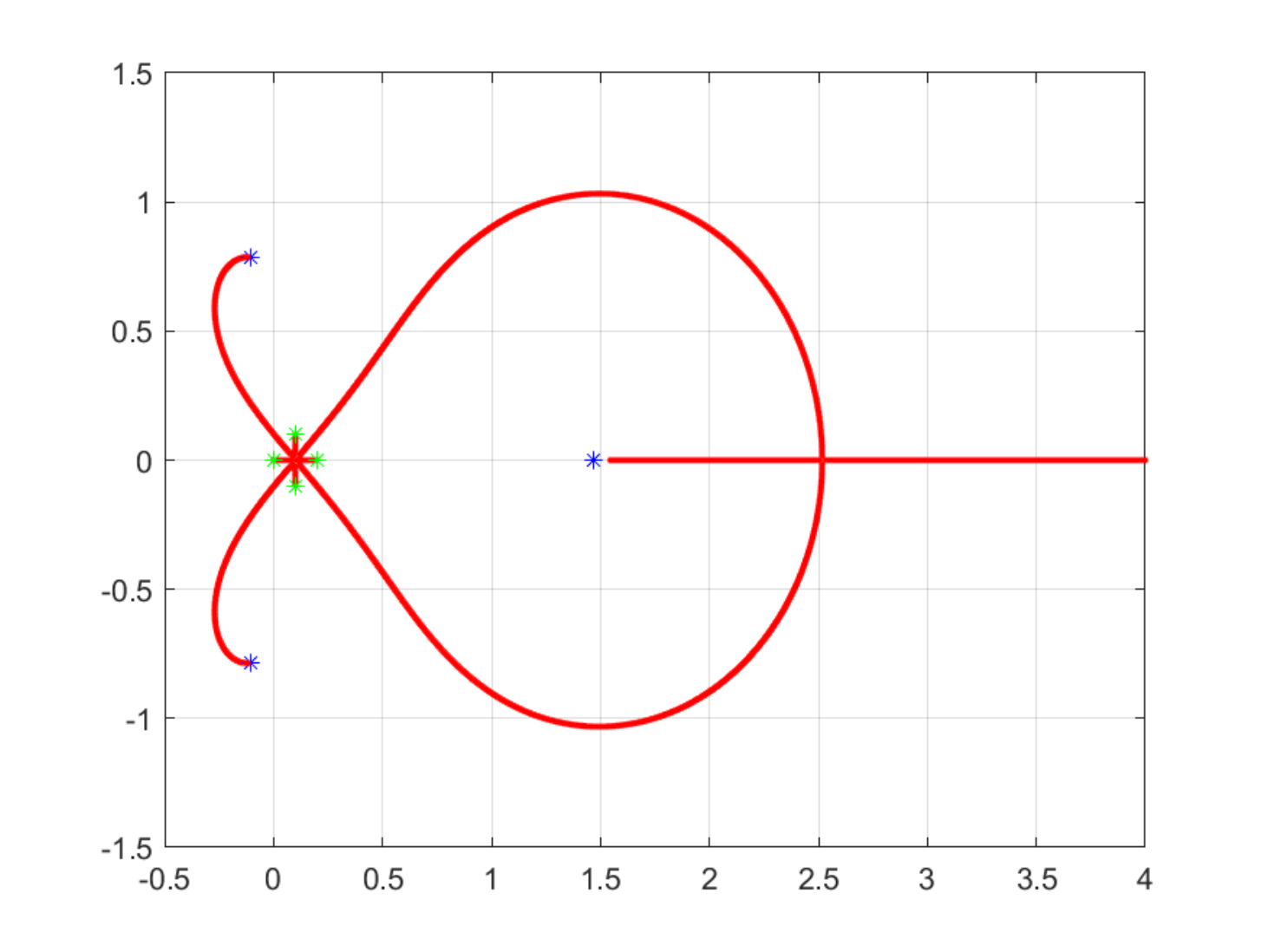}
\caption{Eigenvalue curves of $B(t)$ plotted as functions of $t$. The green stars indicate the eigenvalues of $A=\rho(H)I_4-H$, and the blue stars are the roots of $p_{vw}$.}
\end{figure}

It should be noted that the statements in \cite{RW1}, Theorem 4.1 are made for generic vectors $v$ and $w$ only. However, the fact that $v$ and $w$ have positive entries allows us to apply the results of that theorem in this particular case, which can be seen by a close inspection of the proofs in \cite{RW1}, Lemma 2.1 and Theorem 4.1. The crucial condition is that $w^\top  v\not=0$. Alternatively, this can be seen by applying \cite{RW2}, Theorem 17, which does not depend on generiticity of the vectors $v$ and $w$. ``Generic'' here is taken in the algebraic-geometric sense, that is, the set of vectors $(v,w)$ for which the stated property is not true is contained in the zero set of a finite number of polynomials in the $2n$ variables which are the coordinates of $v$ and $w$.

Obviously, once a counterexample is found in dimension four, counterexamples in any higher dimension can be constructed easily. In fact, if we denote the Jordan block with eigenvalue zero of size $n$ by $J_n$, we see that apart from the $10^{-4}$ in the $4,1$-entry $H$ is equal to $0.1 I_4+J_4$. In dimension $n$ we can construct $H$ as follows: take $0.1 I_n+J_n$ and insert in the $n,1$-entry the number $10^{-n}$. Then again $\rho(H)=0.2$. An appropriate choice of positive vectors $v$ and $w$ will lead to a counterexample in any dimension. We will not go in further detail here.

The counterexample to the conjecture leaves the question what the conditions on $A$, $v$ and $w$ are for the eigenvalues of the matrix $B(t)$ to be in the open right half plane for large values of $t$.
We assume that $n>3$, as the cases $n=2$ and $n=3$ will be discussed in the next section. We make use of \cite{RW2}, Theorem 17 and Lemma 16. In fact, from Theorem 17 (ii) and (iii) in \cite{RW2} we see that it is a necessary condition that $w^\top v\not= 0$, as otherwise there is at least one eigenvalue of $B(t)$ going to infinity in the closed left half plane. Under this condition, the eigenvalues of $B(t)$ are in the open right half plane for large values of $t$ if and only if the roots of the polynomial $p_{vw}(\lambda)=m_A(\lambda)w^\top(\lambda I_n-A)^{-1}v$ are in the open right half plane. Lemma 16 in \cite{RW2} gives a formula for the coefficients of this polynomial: if 
$m_A(\lambda)=\sum_{k=0}^l m_k\lambda^k$, then 
$$
p_{vw}(\lambda)=\sum_{i=0}^{l-1} \left(\sum_{\substack {k-j=i+1\\ k,j\geq 0}}m_k w^\top A^{j} v\right) \lambda^i .
$$
One can then apply the Routh-Hurwitz criterion to this polynomial to see whether its roots are in the open right half plane (see, e.g., \cite{LT}, Section 13.4).

\section{Small dimensions}

We begin this section with a small variation on Lemma 2.10 in \cite{BR}, specified to the situation at hand.

\begin{lemma}\label{lem:BRimproved}
Let $H$ be a nonnegative $n\times n$ matrix, and let $A=\rho(H)I-H$. Assume that the zero eigenvalue of $A$ is algebraically simple. Let $v, w$ be nonnegative vectors in $\mathbb{R}^n$. Suppose the vectors $v$ and $w$ satisfy NZP. Then any real eigenvalue $\mu$ of $B(t)=A+tvw^\top$, where $t>0$, satisfies $\mu >0$. 
\end{lemma}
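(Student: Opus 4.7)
The plan is to argue by contradiction, splitting into the two cases $\mu<0$ and $\mu=0$, using the M-matrix structure of $A$ for the former and the NZP condition for the latter.

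For the case $\mu<0$, I would use that $A-\mu I=(\rho(H)-\mu)I-H$ with $\rho(H)-\mu>\rho(H)$, so by the Neumann series $(A-\mu I)^{-1}=\sum_{k\ge 0}H^k/(\rho(H)-\mu)^{k+1}$ exists and is entrywise nonnegative. The Sherman--Morrison style determinant identity gives
$$\det(B(t)-\mu I)=\det(A-\mu I)\bigl(1+t\,w^\top (A-\mu I)^{-1}v\bigr).$$
Since $v,w\ge 0$ and $(A-\mu I)^{-1}\ge 0$, the scalar $w^\top (A-\mu I)^{-1}v$ is nonnegative, so the second factor is at least $1$; thus $\det(B(t)-\mu I)\neq 0$, contradicting that $\mu$ is an eigenvalue.

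For the case $\mu=0$, I would show directly that $B(t)$ is invertible. Suppose $B(t)x=0$, i.e. $Ax=-t(w^\top x)v$. Split into subcases according to whether $w^\top x=0$. If $w^\top x=0$, then $Ax=0$; algebraic (hence geometric) simplicity of the zero eigenvalue gives $x=cz_r$ for some scalar $c$, and then $0=w^\top x=c\,w^\top z_r$, which by NZP forces $c=0$, so $x=0$. If instead $w^\top x\neq 0$, rescale so $t\,w^\top x=-1$; then $Ax=v$, so $v$ lies in the range of $A$, which equals $(\ker A^\top)^\perp=\{y:z_l^\top y=0\}$. But NZP asserts $z_l^\top v\neq 0$, a contradiction.

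Combining the two cases, no real $\mu\le 0$ can be an eigenvalue of $B(t)$, which is exactly the conclusion. The most delicate step is the $\mu=0$ case, since both halves of the NZP condition are used (one for each subcase of whether $w^\top x$ vanishes); the $\mu<0$ case is essentially a one-line consequence of nonnegativity of $(A-\mu I)^{-1}$ and of $v,w$, and does not use NZP at all.
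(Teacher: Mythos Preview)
Your proof is correct and follows essentially the same approach as the paper. The paper handles $\mu<0$ by invoking Lemma~2.10 of \cite{BR} (whose content is exactly your Neumann-series/Sherman--Morrison argument), and for $\mu=0$ it multiplies $B(t)x=0$ on the left by $z_l^\top$ to conclude $w^\top x=0$ in one stroke, thereby collapsing your two subcases into one; otherwise the arguments coincide.
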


\begin{proof}
From Lemma 2.10 in \cite{BR}, which does not require the condition NZP, we have that any real eigenvalue $\mu$ of $B(t)$ is nonnegative. So it remains to show that the extra condition NZP implies that $\mu >0$. To see this, assume that for some $B(t)$ has eigenvalue zero for some $t>0$, and let $x\not=0$ be such that $B(t)x=0$. Then $0=z_l^\top B(t)x=z_l^\top (A+tvw^\top)x=tz_l^\top vw^\top x$. By NZP $z^\top v\not= 0$, and since also $t>0$, we must have $w^\top x=0$. But then $0=B(t)x=Ax+tvw^\top x=Ax$. Since the zero eigenvalue of $A$ is algebraically simple it follows that $x$ is a nonzero multiple of $z_r$. But then $w^\top x=0$ impies that $w^\top z_r=0$, which contradicts NZP. So $B(t)$ is invertible for all positive $t$. 
\end{proof}

Observe that if $H$ is irreducible, then as we observed before, the condition NZP is automatically satisfied and the zero eigenvalue of $A$ is algebraically simple in that case.

\begin{proposition}
Let $n=2$ or $n=3$.
Let $H$ be an irreducible nonnegative $n\times n$ matrix, let $A=\rho(H)I-H$, and let $v$ and $w$ be nonnegative vectors in $\mathbb{R}^n$, and, in the case that $n=3$, assume $w^\top  v\not= 0$.
Then there is a $t_1>0$ such that for $t>t_1$ the eigenvalues of $B(t)=A+tvw^\top $ are in the open right half plane.
\end{proposition}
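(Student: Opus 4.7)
The case $n=2$ is already handled by \cite{BR}, Theorem 2.7, so I would focus on $n=3$. For the three-dimensional case my plan is to invoke the asymptotic machinery from Section~2: by \cite{RW2}, Theorem~17, applicable because $w^\top v \neq 0$, as $t\to\infty$ one eigenvalue of $B(t)$ is approximately $tw^\top v$ and hence positive, while the remaining eigenvalues converge either to the repeated nonzero eigenvalues of $A$ (the roots of $\det(\lambda I - A)/m_A(\lambda)$, which already lie in the open right half plane as nonzero eigenvalues of the singular $M$-matrix $A$) or to the roots of the polynomial $p_{vw}$. By continuity, the proposition thus reduces to showing that every root of $p_{vw}$ lies in the open right half plane.

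Next I would split on $\ell := \deg m_A \in \{2,3\}$ and write $p_{vw}$ explicitly via Lemma~16 of \cite{RW2}. Let the eigenvalues of $A$ be $0,\alpha,\beta$ (with $\operatorname{Re}\alpha, \operatorname{Re}\beta > 0$) and let $P_0 = z_r z_l^\top/(z_l^\top z_r)$ be the spectral projection onto $\ker A$, so that $w^\top P_0 v = (w^\top z_r)(z_l^\top v)/(z_l^\top z_r) > 0$ by irreducibility and NZP. When $\ell = 2$ (so $\alpha = \beta > 0$ and $A$ is diagonalizable), a short computation gives $p_{vw}(\lambda) = (w^\top v)\lambda - \alpha(w^\top P_0 v)$, whose unique root $\alpha(w^\top P_0 v)/(w^\top v)$ is positive. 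When $\ell = 3$, $p_{vw}(\lambda) = a_2\lambda^2 + a_1\lambda + a_0$ with $a_2 = w^\top v > 0$, $a_1 = w^\top A v - (\alpha+\beta)w^\top v$, and $a_0 = \alpha\beta\,(w^\top P_0 v) > 0$ (since $\alpha\beta > 0$ either way). By the Routh-Hurwitz criterion, the two roots lie in the open right half plane exactly when $a_1 < 0$.

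The crux of the proof is this last inequality. Substituting $A = \rho I - H$ and $\alpha+\beta = 3\rho - \operatorname{tr}(H)$, I would rewrite $a_1 = -w^\top H' v$, where $H' := H - (\operatorname{tr}(H) - 2\rho)\,I$. The off-diagonal entries of $H'$ coincide with those of $H$ and are nonnegative, while the diagonal entries satisfy $H'_{ii} = 2\rho - (H_{jj} + H_{kk})$ for $\{i,j,k\} = \{1,2,3\}$. The key structural fact I would prove is that for an irreducible nonnegative $H$ with simple Perron root $\rho$ one has $H_{ii} < \rho$ strictly for each $i$: indeed, evaluating $Hz_r = \rho z_r$ componentwise and using $z_r > 0$, an equality $H_{ii} = \rho$ would force all off-diagonal entries of row $i$ of $H$ to vanish, contradicting irreducibility. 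Hence $H'_{ii} > 0$ for every $i$, and since the hypothesis $w^\top v > 0$ forces $w_i v_i > 0$ for some $i$, this yields $w^\top H' v \geq \sum_i H'_{ii} w_i v_i > 0$, i.e.\ $a_1 < 0$, as required.

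The main obstacle is precisely this $a_1<0$ step. The naive bound $a_1 \leq (\operatorname{tr}(H) - 2\rho)\,w^\top v$ is useless in the regime $\operatorname{tr}(H) > 2\rho$, which does occur (for instance when $H$ has a large constant diagonal part), and the refinement through $H'$ together with the strict Perron-Frobenius inequality $H_{ii} < \rho$ is what makes the three-dimensional case go through while the four-dimensional analogue already fails.
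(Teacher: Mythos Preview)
Your proof is correct and its core coincides with the paper's: both reduce the $n=3$ case to showing that the linear coefficient of $p_{vw}$ is negative, and both do this by observing that the matrix $A-(\operatorname{tr}A)I$ (your $-H'$, the paper's $p_2I+A$) is entrywise nonpositive, the crucial point being that the diagonal entries $a_{ii}$ of an irreducible singular $M$-matrix are strictly positive---equivalently, your $H_{ii}<\rho$, which you prove directly while the paper quotes \cite{BP}. The framing differs only mildly: the paper handles the case where the roots of $p_{vw}$ are real by invoking Lemma~\ref{lem:BRimproved} (any real eigenvalue of $B(t)$ is positive), so it never needs the constant term, whereas you treat both cases at once via Routh--Hurwitz and therefore also verify $a_0=\alpha\beta\,w^\top P_0 v>0$ through the spectral projection; in addition you explicitly dispose of the degenerate possibility $\deg m_A=2$, which the paper's argument passes over.
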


For $n=2$ it was already shown in \cite{BR}, Theorem 2.7, part (i) that both eigenvalues will be in the open right half plane for all $t>0$. However, for completeness we present a proof here as well, which also provides more detail about the behaviour of the eigenvalues for large values of $t$.

It will be shown in a later example that the condition $w^\top v\not= 0$ is necessary when $n=3$.

\begin{proof}
\subsubsection*{The case $n=2$.} 
For $n=2$ it will be shown that the eigenvalues of $B(t)$ are in the open right half plane for all $t>0$.
Indeed, as $H$ is irreducible, $A$ has two different eigenvalues, $0$ and a positive number $\mu$.
Hence $\det A =0$ and ${\rm trace\,}A=\mu$.  Then from \cite{RW2}, Proposition 2 (see also \cite{RW1}, Proposition 2.2) we have that the eigenvalues of $B(t)$ which are not eigenvalues of $A$ are the solutions of $w^\top  (\lambda I_2-A)^{-1}v=\frac{1}{t}$. Multiplying left and right with the characteristic polynomial $p_A(\lambda)$ of $A$, one sees that this is equivalent to $\lambda$ being a solution of
\begin{equation}\label{degree21}
\lambda w^\top  v -w^\top ({\rm adj\, }A)v=\frac{1}{t} (\lambda^2-\mu\lambda),
\end{equation}
where ${\rm adj\, }A$ is the adjugate matrix of $A$. In turn, this is equivalent to $\lambda$ being a solution of
\begin{equation}\label{degree2}
\lambda^2-\lambda(\mu+tw^\top  v)+t w^\top ({\rm adj\, }A)v=0.
\end{equation}
If the solutions of this equation are both real, then they have to be positive, by Lemma \ref{lem:BRimproved}. If they are non-real, then the real part of the two solutions $\lambda_{1,2}$ is equal to 
${\rm Re\, }(\lambda_{1,2})=\tfrac{1}{2}(\mu+tw^\top  v) >0$, and hence the eigenvalues lie in the open right half plane. Note that this does not require the condition $w^\top v\not= 0$. 

In fact, when $w^\top v\not=0$, we can be more precise about the behaviour of the eigenvalues for $t\to\infty$.  Either solving for the eigenvalues explicitly from \eqref{degree2} and \eqref{degree21}, or by using \cite{RW2}, Theorem 17 (ii) and (iii), for large $t$ (see also Remark 18 there), we see that one of the eigenvalues of $B(t)$ will go to infinity along the real line and this eigenvalue is equal to $tw^\top  v +O(1)$ .
The other eigenvalue is approximately equal to
$\zeta=\frac{w^\top  {\rm adj\, }(A)v}{w^\top  v}$.
Because $w^\top v\not= 0$ and ${\rm adj\, }(A)$ is a nonnegative matrix, $\zeta>0$. By  \cite{RW2},Theorem 17 ( (v) for large values of $t$ this second eigenvalue is equal to  $\zeta +\frac{r}{t}+O(t^{-2})$, where $r=| w^\top  (\zeta-A)^{-2}v |$.

In the case that $w^\top v=0$ we can also be more precise about the behaviour of the eigenvalues for $t\to\infty$. Solving explicitly for the eigenvalues we see that the eigenvalues will go to infinity along the line ${\rm Re} \lambda_{1,2} =\tfrac{1}{2}\mu$ as $t\to \infty$.

\subsubsection*{The case $n=3.$} In this case we shall show that for large values of $t$ the eigenvalues of $B(t)$ are eventually in the open right half plane under the condition $w^\top v\not= 0$. There are two cases to consider: the first is that for large values of $t$ all eigenvalues are real. This is the easy case, as by Lemma \ref{lem:BRimproved} for large values of $t$ the eigenvalues of $B(t)$ have to be positive. 

In the second case, the matrix $B(t)$ has one real eigenvalue, again positive by  Lemma \ref{lem:BRimproved}, and a pair of complex eigenvalues. In fact, the real eigenvalue must go to infinity along the positive real axis as $tw^\top  v+O(1)$ according to \cite{RW2}, Theorem 17 (ii) and (iii). The complex eigenvalues then have to approximate the two roots of the polynomial $p_{vw}(\lambda)$. So it remains to prove that the roots of $p_{vw}(\lambda)$ are in the open right half plane.

Since $A$ is a singular $M$-matrix and zero is a simple eigenvalue of $A$ by the irreducibility of the nonnegative matrix $H$, the characteristic polynomial of $A$ is of the form $p_A(\lambda)=\lambda^3 +p_2\lambda^2+p_1\lambda$, with $p_2=-{\rm trace\, }A <0$ and $p_1\not=0$. Then, by direct computation, or from \cite{RW2}, Lemma 16,
$$
p_{vw}(\lambda)=\lambda^2 w^\top v + \lambda (p_2 w^\top v +w^\top Av)+
(p_1w^\top v+p_2w^\top Av+ w^\top A^2v).
$$
The roots of $p_{vw}$ are given by
$$
\lambda_{1,2}=\frac{-(p_2 w^\top v +w^\top Av) \pm \sqrt{D}}{2w^\top v},
$$
where $D$ is the discriminant. Obviously, this depends on the sign of $D$. 
The case where the two roots are real has already been solved.
So, we may assume $D<0$. Then the real part of the two roots $\lambda_{1,2}$ is given by
$$
{\rm Re\, } \lambda_{1,2}= \frac{-(p_2 w^\top v +w^\top Av)}{2w^\top v}.
$$
By assumption $w^\top  v >0$, so the sign of the real part of $\lambda_{1,2}$ is equal to the sign of $-(p_2w^\top  v+w^\top  Av)=-w^\top  (p_2 I+A) v$. Now $p_2=-{\rm trace\, }A=-(a_{11}+a_{22}+a_{33})$ and so
$$
p_2I+A =\begin{bmatrix}
-(a_{22}+a_{33}) & a_{12} & a_{13} \\
a_{21} & -(a_{11}+a_{33}) & a_{23}\\
a_{31} & a_{32} & -(a_{11}+a_{22})
\end{bmatrix}.
$$
As $A$ is an irreducible $M$-matrix, its off-diagonal entries are nonpositive, and its diagonal entries are positive, by \cite{BP}, Chapter 6, Theorem 4.16, part 4. Since $w$ and $v$ are nonnegative vectors with $w^\top  v\not=0$, the product $w^\top  (p_2I+A)v$ is negative, as at least for one $i=1,2,3$ we will have $v_i>0$ and $w_i>0$.
It follows that the real part of the roots $\lambda_{1,2}$ of $p_{vw}$ is positive, and hence $\lambda_{1,2}$ are in the open right half plane. Consequently, all three eigenvalues of $B(t)$ are in the open right half plane for $t>0$ large enough.
\end{proof}

For $n=2$ the eigenvalues of $B(t)$ are in the right half plane for all $t>0$. 
The next example shows that for $n=3$ it is not true that the eigenvalues of $B(t)$ are in the open right half plane for all $t>0$.

\begin{example}{\rm
Let $H=\begin{bmatrix} 0.1 & 1 & 0 \\ 0 & 0.1 & 1\\ 10^{-4} & 0 & 0.1\end{bmatrix}$. Then $\rho(H)=0.1464$. Let $v=\begin{bmatrix} 0.6 & 0.1 & 0.3 \end{bmatrix}^\top $ and $w=\begin{bmatrix} 0.5 & 1 &1\end{bmatrix}^\top $. Then the eigenvalues of $B(0.1)$ are $ 0.2661$ and $-0.0284\pm 0.2495i$.

The eigenvalues of $B(t)$ are plotted as functions of $t$ in Figure 2 below.
\begin{figure}[H]
\centering
\includegraphics[height=5.5cm]{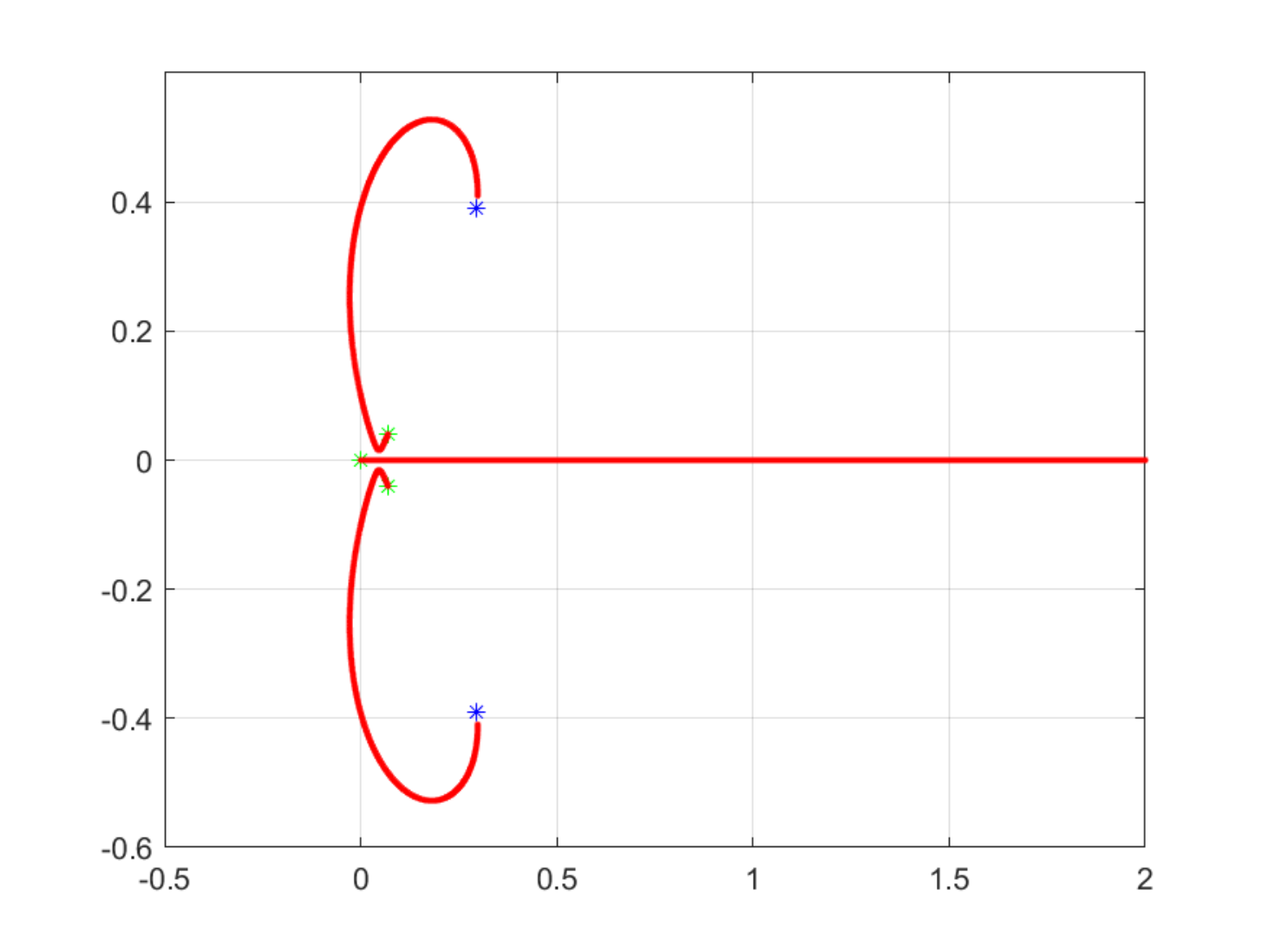}
\caption{Eigenvalue curves of $B(t)$ plotted as functions of $t$. The green stars indicate the eigenvalues of $A=\rho(H)I_3-H$, and the blue stars are the roots of $p_{vw}$.}
\end{figure}
}
\end{example}

The next example shows that the condition $w^\top v\not=0$ cannot be missed when $n=3$.

\begin{example}{\rm 
Let $H=\begin{bmatrix} 0 & 1 & 0 \\ 0 & 0 & 1 \\ 1 & 0 & 0 \end{bmatrix}$. Then $H$ is nonnegative, irreducible and $\rho(H)=1$. So $A=\begin{bmatrix} 1 & -1 & 0 \\ 0 & 1 & -1 \\ -1 & 0 & 1\end{bmatrix}$. Take $v=\begin{bmatrix} 1 \\ 0 \\ 0 \end{bmatrix}$ and $w=\begin{bmatrix} 0 \\ 6 \\ 1 \end{bmatrix}$. Then $w^\top v=0$, $w^\top Av=-1$ and $w^\top A^2v=4$. Put $B(t)=A+tvw^\top =\begin{bmatrix} 1 & -1+6t & t \\ 0 & 1 & -1 \\ -1 & 0 & 1\end{bmatrix}$. One checks that the characteristic polynomial of $B(t)$ is equal to
$$
p_{B(t)}(\lambda)=\lambda^3-3\lambda^2+3\lambda +t(\lambda -7).
$$
Obviously, as $t\to\infty$ one of the roots will go to $7$. 
By the proof of \cite{RW2}, Theorem 17 (iii) there are two eigenvalues of $B(t)$ going to infinity, with a Puiseux series expansion given by
$\sqrt{t}\sqrt{w^\top Av} +\tfrac{1}{2}\cdot \tfrac{w^\top A^2v}{w^\top Av}+O(\tfrac{1}{\sqrt{t}})$. Inserting the values $w^\top Av=-1$ and $w^\top A^2v=4$, this becomes
$\pm\sqrt{t} i -2+O(\tfrac{1}{\sqrt{t}})$. We see that as $t\to\infty$ there are two eigenvalues of $B(t)$ which approximate the line ${\rm Re\, }(\lambda)=-2$, and hence these eigenvalues are in the open left half plane for all $t$ large enough.  

The eigenvalues of $B(t)$ are plotted as functions of $t$ in Figure 3 below.
\begin{figure}[H]
\centering
\includegraphics[height=5.5cm]{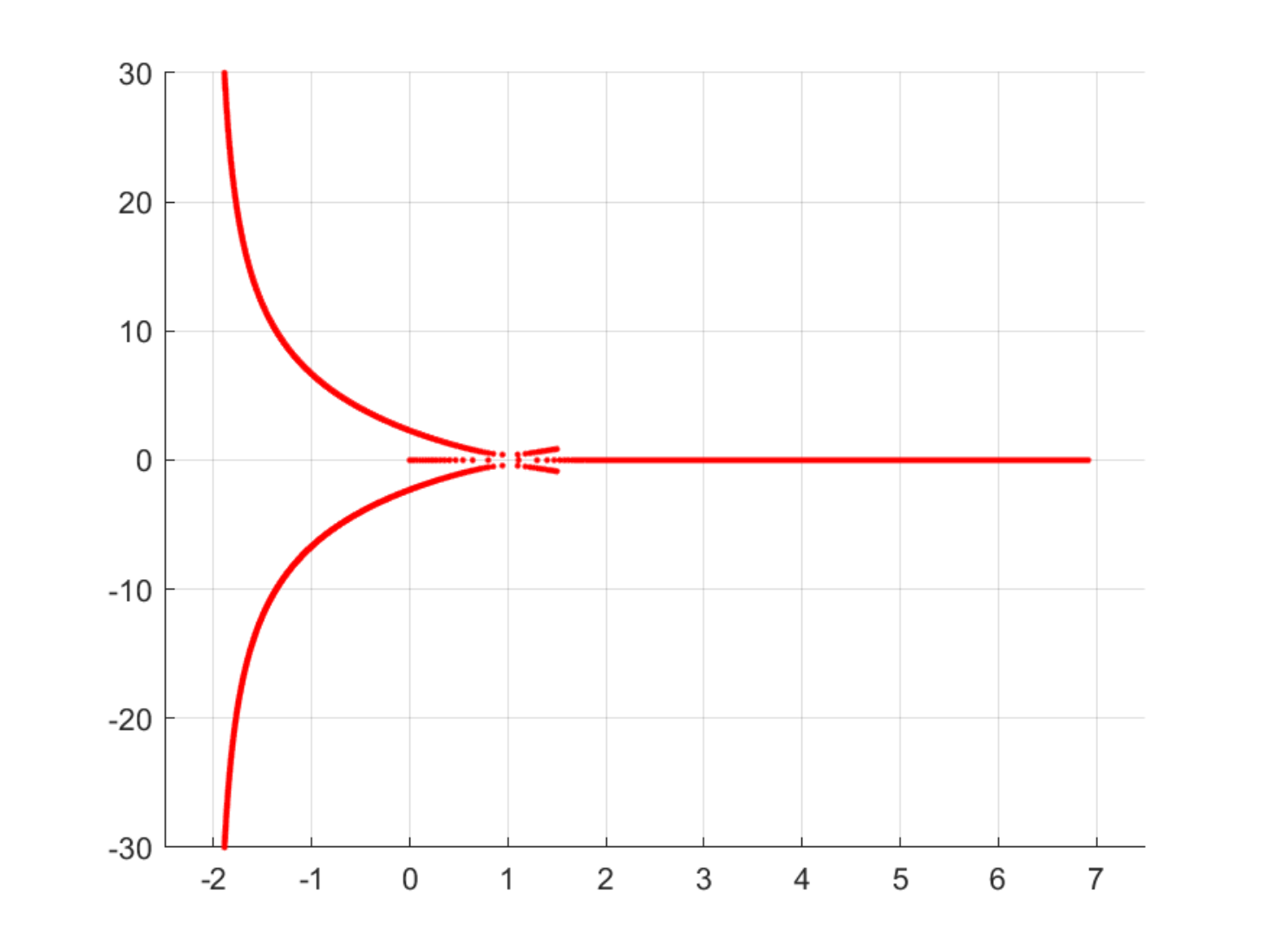}
\caption{Eigenvalue curves of $B(t)$ plotted as functions of $t$.}
\end{figure}

Taking this example one step further, let $A$ and $v$ be as above, but now take $w=\begin{bmatrix}0 \\ 1 \\ 0 \end{bmatrix}$. So $B(t)=\begin{bmatrix} 1 & t-1 & 0 \\ 0 & 1 & -1 \\ -1 & 0 & 1\end{bmatrix}$.
Then $w^\top v=0, w^\top Av=0$ and $w^\top A^2v=1$, and, conforming to \cite{RW2}, Theorem 17 (iii), there are three eigenvalues going to infinity as $t\to\infty$. In fact, since the characteristic polynomial of $B(t)$ is given by $(1-\lambda)^3 +(t-1)$, for $t>1$ these eigenvalues are given by $\lambda_j=1+\sqrt[3]{t-1}e^{2\pi ij/3}$ for $j=0,1,2$. Note that again two of them are in the open left half plane.
}
\end{example}

{\bf Acknowledgement.} The authors would like to express their gratitude to the referee, whose comments led to significant improvements in the presentation of the paper.

\end{document}